\tikzset{cross/.style={cross out, draw=black, minimum size=2*(#1-\pgflinewidth), inner sep=0pt, outer sep=0pt},
cross/.default={1pt}}
\pgfplotsset{compat=newest}
\pgfplotsset{plot coordinates/math parser=false}
\newlength\figureheight
\newlength\figurewidth
\newtheorem{theorem}{Theorem}
\newtheorem{lemma}[theorem]{Lemma} 
\newtheorem{proposition}[theorem]{Proposition}
\newtheorem{definition}[theorem]{Definition}
\newtheorem{claim}[theorem]{Claim}
\newcommand\norm[1]{\left\lVert#1\right\rVert}
\newcommand{\pmat}[3]{\begin{pmatrix} #1 & #2 &\cdots & #3 \end{pmatrix}}
\newcommand{\ftf}{( v_1, \ldots, v_K )}
\newcommand{\moq}[3]{\eta(#1,#2,#3)}
      \theoremstyle{definition}
\newcommand{\opt}{^\ast}
\newcommand{\transp}{^\top}
\newcommand{\A}{\textbf{a}}
\newcommand{\R}[1]{\mathbb{R}^{#1}}
\newcommand{\mat}[2]{\mathbb{R}^{{#1} \times {#2} } }
\newcommand{\posdef}[1]{\mathbb{R}^{#1 \times #1}_{++}}
\newcommand{\possemdef}[1]{\mathbb{R}^{#1 \times #1}_{+}}
\newcommand{\sphere}[1]{S_{n - 1}}
\newcommand{\eigval}[1]{\lambda_{#1}}
\newcommand{\kalman}[3]{ \mathcal{ C }_{ ( #1, #2, #3 ) } }
\newcommand{\grammian}[3]{G_{ ( #1, #2, #3 ) } }
\DeclareSymbolFont{symbolsC}{U}{pxsyc}{m}{n}
\DeclareMathSymbol{\coloneqq}{\mathrel}{symbolsC}{"42}\DeclareMathOperator{\trace}{tr}
\DeclareMathOperator{\image}{image}
\DeclareMathOperator{\Span}{span}
\DeclareMathOperator{\FF}{FF}
\DeclareMathOperator{\FP}{FP}
\DeclareMathOperator{\MFP}{NFP}
\DeclarePairedDelimiter{\abs}{\lvert}{\rvert}
\title{On a frame theoretic measure of quality of LTI systems 
}
\author{Mohammed Rayyan Sheriff, Debasish Chatterjee 
\thanks{D Chatterjee was supported in part by the grant 12IRCCSG005 from IRCC, IIT Bombay.}
\thanks{M Rayyan and Debasish Chatterjee are with the interdisciplinary program in Systems and Control Engineering, IIT Bombay, India. {\tt\small Emails: mohammedrayyan@sc.iitb.ac.in, dchatter@sc.iitb.ac.in }}
}
\begin{document}

\maketitle
\thispagestyle{empty}
\pagestyle{empty}

\begin{abstract}

It is of practical significance to define the notion of a measure of quality of a control system, i.e., a quantitative extension of the classical notion of controllability. In this article we demonstrate that the three standard measures of quality involving the trace, minimum eigenvalue, and the determinant of the controllability grammian achieve their optimum values when the columns of the controllability matrix from a tight frame. Motivated by this, and in view of some recent developments in frame theoretic signal processing, we provide a measure of quality for LTI systems based on a measure of tightness of the columns of the reachability matrix .

\end{abstract}

\section{Introduction}

\label{section:Review-of-some-conventional-MOQ's-for-LTI-systems}

Let us consider a linear time invariant controlled system
\begin{equation}
\label{eq:lti-system}
x(t + 1) = A x(t) + B u(t) \quad \text{for t = 1, 2, \ldots,}
\end{equation}
where \( A \in \mat{n}{n} \) and \( B \in  \mat{n}{m} \), with \( n \) and \( m \) some positive integers \footnote{The set of \( n \times n \) real symmetric positive definite and positive  semi definite matrices are denoted by \( \posdef{n} \) and \( \possemdef {n} \) respectively, \( \image(M)\) is the \emph{column space} of the matrix \( M \), \( \trace(M) \) is the \emph{trace} of the matrix \( M \), \( x\transp \) is the \emph{transpose} of the vector \( x \), \( \norm{x}_2 \) is the \( \ell_2 \)-\emph{norm} of the vector \( x \).}. For \( T \in \mathbb{N} \), let \( \R{n \times Tm} \ni \kalman{A}{B}{T} \coloneqq \pmat{B}{AB}{A^{T - 1}B} \). Then the \emph{reachable space} \( \mathcal{R}_{ (A,B) } \) and the \emph{reachability index}  \( \tau \) of the LTI system \eqref{eq:lti-system} are defined by
\begin{align*}
\mathcal{R}_{ (A,B) } & \coloneqq \image \big( \kalman{A}{B}{n} \big), \\
\tau & \coloneqq \min \big\{ t \in \mathbb{N} \vert \image \big( \kalman{A}{B}{t} \big) = \mathcal{R}_{ (A,B) } \big\}.
\end{align*}
We say that the control system \eqref{eq:lti-system} is \emph{controllable} in the classical sense if it admits a control sequence \( \big( u(0), \ldots, u(T - 1) \big) \) that can transfer the states of the system \eqref{eq:lti-system} from \( x(0) = x_0 \) to \( x(T) = x_T \) for preassigned values of \( x_0,x_T \) whenever \( T \geq \tau \). It is common knowledge that the LTI system \eqref{eq:lti-system} is controllable if and only if the matrix \( \kalman{A}{B}{\tau} \) is of rank \( n \). 

However, in most of the practical scenarios, information about the system behavior provided by the classical notion of controllability is limited. In particular, no assessment of ``how controllable'' is a given LTI system is provided by the classical ideas. Naturally, together with knowing whether a control system is controllable or not, one would also like to know \emph{how controllable} or \emph{how good} is the control system. In other words, there is a clear need to quantify the notion of controllability.

For instance, let us consider two second order LTI systems described by the pairs \( (A,B) \) and \( (A',B') \) such that for \( T = 3 \) the columns of the matrices \( \kalman{A}{B}{T} \), \( \kalman{A'}{B'}{T} \) are as shown in the following figure.
\begin{figure}[H]
\label{fig:lti-system-comparison}
			\centering
			\subfloat[]{
			\begin{tikzpicture}
			\coordinate (A) at (0,0);
			\coordinate (B) at (1,0);
			\coordinate (C) at (0.8,1);
            \coordinate (D) at (-0.5,0.9);
			\draw [fill=blue] (A) circle (2pt) node [left] {};
			\draw [fill=blue] (B) circle (2pt) node [right] {\( B \)};
			\draw [fill=blue] (C) circle (2pt) node [right] {\( AB \)};
			\draw [fill=blue] (D) circle (2pt) node [above] {\( A^2B \)};
            
			\draw [-latex, red, thick] (A) -- (B);
			\draw [-latex, red, thick] (A) -- (C);
            \draw [-latex, red, thick] (A) -- (D);
			\end{tikzpicture}
				}	
				\subfloat[]{
			\begin{tikzpicture}
			\coordinate (A) at (0,0);
			\coordinate (B) at (1,-0.2);
			\coordinate (C) at (1.2,0.3);
            \coordinate (D) at (1,0.5);
			\draw [fill=blue] (A) circle (2pt) node [left] {};
			\draw [fill=blue] (B) circle (2pt) node [right] {\( B' \)};
			\draw [fill=blue] (C) circle (2pt) node [right] {\( A'B' \)};
     		\draw [fill=blue] (D) circle (2pt) node [above] {\( {A'}^2B' \)};
            
			\draw [-latex, red, thick] (A) -- (B);
			\draw [-latex, red, thick] (A) -- (C);
            \draw [-latex, red, thick] (A) -- (D);
			\end{tikzpicture}
				}
				
	\caption{Comparison of the orientation of the columns of \( \kalman{A}{B}{T} \) and \( \kalman{A'}{B'}{T} \)}		
		\end{figure}
We see that for the system \( (A',B') \), since the vectors \( B', A'B', {A'}^2B' \) form small angles with each other, intuitively, the transfer of the system state to final states \( x_T \) that is in approximately orthogonal directions to these vectors will demand control with higher magnitudes. In contrast, for the system \( (A,B) \) the vectors are spread out in space and will in general require lower control magnitude. Such a collection of vectors are called as \emph{tight frames}. It turns out that tight frames have many practical advantages over generic collection of vectors \cite{sheriff2016optimal},  \cite{shen2006image}, \cite{zhou2016adaptive}, \cite{strohmer2003grassmannian}, \cite{holmes2004optimal} from signal processing perspective. In this article we shall establish that if the columns of \( \kalman{A}{B}{T} \) constitute a tight frame, one also gets several control theoretic advantages.

In particular, we provide a notion of quantitative controllability, which we shall refer to as the \emph{Measure Of Quality} (MOQ) of a control system. Intuitively speaking, any measure of quality should relate to some important characteristics of the system like average control energy / control effort required, robustness to input / noise, ability to control the systems with sparse controls etc We shall see below that such intuitive ideas are indeed justified.

Let us define the \emph{control effort} \( J : \R{mT} \longrightarrow \R{} \) of a control sequence \( \big( u(0), \ldots, u(T - 1) \big) \), defined by 
\[
J \big( u(0), \ldots, u(T - 1) \big) \coloneqq \sum_{t = 0}^{T -1} \norm{u(t)}_2^2 ,
\] 
is a quantity of practical significance. For example, the value of the control effort in an electronic circuit system would involve information of the amount of power drawn to control the system. One of the primary objectives in controlling a dynamical system efficiently could be to minimize the required control effort. Therefore, let us consider the following optimal control problem:
\begin{equation}
\label{eq:optimal-control-problem}
\begin{aligned}
& \underset{u(t)}{\text{minimize}}
& &  J \big( u(0), \ldots, u(T - 1) \big) \\
& \text{subject to}
& & x(0) = 0, \; x(T) = x, \\
& & & x(t) \text{ evolves according to \eqref{eq:lti-system}}.
\end{aligned}
\end{equation}
Let the \emph{grammian} \( \grammian{A}{B}{T} \) be defined by 
\[
\grammian{A}{B}{T} \coloneqq \sum_{t = 0}^{T -1} A^t B B^\top (A^\top)^t = \kalman{A}{B}{T} \kalman{A}{B}{T}\transp.
\] 
We know that the optimal control problem \eqref{eq:optimal-control-problem} admits an unique optimal control sequence \( \big( u\opt(0), \ldots, u\opt(T - 1) \big) \) with the optimum control effort \( J(A,B,x) \) given by:
\begin{equation}
\begin{aligned}
\big( { u\opt(T - 1) }\transp \cdots \; { u\opt(0) }\transp   \big)\transp &\coloneqq \kalman{A}{B}{T}^+ x, \\
J(A,B,x) \coloneqq J \big( u\opt(0), \ldots, u\opt(T - 1) \big)
& = x\transp \grammian{A}{B}{T}^{-1} x.
\end{aligned}
\end{equation}

We briefly review some of the most well known MOQ's studied in the literature \cite{muller1972analysis}, \cite{johnson1969optimization}, \cite{pasqualetti2014controllability}; they capture someinformation pertaining to  optimum control effort needed to control the system:
\begin{enumerate}[(i)]
\item \( \trace \big( \grammian{A}{B}{T}^{-1} \big) \): This is proportional to the average optimal control effort needed to transfer the system state from origin (i.e., \( x_0 = 0\),) to a random point that is uniformly distributed on the unit sphere. 

\item \( \lambda_{\min}^{-1} \big( \grammian{A}{B}{T} \big) \): This gives the maximum control effort needed to transfer the system state from origin to any point on the unit sphere.

\item \( \det \big( \grammian{A}{B}{T} \big) \): This proportional to the volume of the ellipsoid containing points to which the system state can be transferred to from the origin using at most unit control effort. 
\end{enumerate}

A finite value of the MOQ in (i) and (ii), and a nonzero value in (iii), implies that the system \eqref{eq:lti-system} is controllable in the classical sense. In addition, since these quantities also contain some information about the optimal control effort, they can be considered as valid candidates for an MOQ. However, these MOQ's are increasingly difficult to compute as the size of the system becomes larger, and since they arise primarily in the context of the optimal control problem \eqref{eq:optimal-control-problem}, they do not provide meaningful insights about properties such as immunity to noise, robustness, ability to generate sparse controls etc.

For the dynamical system \eqref{eq:lti-system}, we shall motivate the notion of an MOQ from the perspective of the orientation of the columns of the matrix \( \kalman{A}{B}{T} \). We shall prove in this article that the three MOQ's discussed above achieve their optimum value when the columns of the matrix \( \kalman{A}{B}{T} \) constitute a tight frame. We provide an MOQ for the system \eqref{eq:lti-system} as a measure of the tightness of the columns of \( \kalman{A}{B}{T} \), and provide a sufficient condition for the classical controllability of the system \eqref{eq:lti-system} in terms of the proposed MOQ.

\section{Review of tight frames}
\label{sec:tight-frame}
\begin{definition}
For an \( n \) dimensional Hilbert space  \( H_n \) with an inner product \( \langle \cdot , \cdot \rangle \), a sequence of vectors \( \ftf \subset H_n \) is said to be a \emph{frame} of \( H_n \) if it spans \( H_n \); a frame is said to be \emph{tight},  if there exists a number \( \A > 0 \) such that for every \( v \in H_n \), the equality \( \A \norm{v}^2 = \sum_{i = 1}^K \abs{\langle v, v_i \rangle}^2 \) holds.
\end{definition}
A classical example of a tight frame is an orthonormal basis of \( H_n \) that satisfies the definition with \( \A = 1 \) and \( K = n \). Some other  examples of the tight frames of \( \R{2} \) are shown in figure below.
\begin{figure}[H]
\label{fig:tight-frmaes-example}
			\centering
			\subfloat[]{
			\begin{tikzpicture}
			\coordinate (A) at (0,0);
			\coordinate (B) at (1,0);
			\coordinate (C) at (-0.5, -0.866);
            \coordinate (D) at (-0.5, 0.866);
			\draw [fill=blue] (A) circle (2pt) node [left] {};
			\draw [fill=blue] (B) circle (2pt) node [right] {};
			\draw [fill=blue] (C) circle (2pt) node [right] {};
			\draw [fill=blue] (D) circle (2pt) node [above] {};
            
			\draw [-latex, red, thick] (A) -- (B);
			\draw [-latex, red, thick] (A) -- (C);
            \draw [-latex, red, thick] (A) -- (D);
			\end{tikzpicture}
				}	
				\subfloat[]{
			\begin{tikzpicture}
			\coordinate (A) at (0,0);
			\coordinate (B) at (1,0);
			\coordinate (C) at (0.707,0.707);
            \coordinate (D) at (0,1);
            \coordinate (E) at (0.707, -0.707);
			\draw [fill=blue] (A) circle (2pt) node [left] {};
			\draw [fill=blue] (B) circle (2pt) node [right] {};
			\draw [fill=blue] (C) circle (2pt) node [right] {};
     		\draw [fill=blue] (D) circle (2pt) node [above] {};
            \draw [fill=blue] (E) circle (2pt) node [above] {};
            
			\draw [-latex, red, thick] (A) -- (B);
			\draw [-latex, red, thick] (A) -- (C);
            \draw [-latex, red, thick] (A) -- (D);
            \draw [-latex, red, thick] (A) -- (E);
			\end{tikzpicture}
				}
				\subfloat[]{
			\begin{tikzpicture}
			\coordinate (A) at (0,0);
			\coordinate (B) at (1,0);
			\coordinate (C) at (0.9397, -0.3420);
            \coordinate (D) at (0.3420,0.9397);
            \coordinate (E) at (0, 1);
			\draw [fill=blue] (A) circle (2pt) node [left] {};
			\draw [fill=blue] (B) circle (2pt) node [right] {};
			\draw [fill=blue] (C) circle (2pt) node [right] {};
     		\draw [fill=blue] (D) circle (2pt) node [above] {};
            \draw [fill=blue] (E) circle (2pt) node [above] {};
            
			\draw [-latex, red, thick] (A) -- (B);
			\draw [-latex, red, thick] (A) -- (C);
            \draw [-latex, red, thick] (A) -- (D);
            \draw [-latex, red, thick] (A) -- (E);
			\end{tikzpicture}
				}
	\caption{Tight frames of \( \R{2} \).}				
		\end{figure}
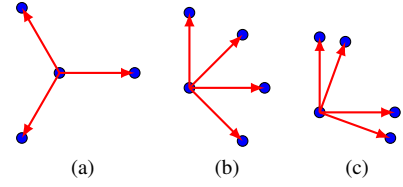
The collection of vectors pointing towards the corners of platonic solids also constitute tight frames; several other examples may be found in \cite{benedetto2003finite}.

The study of frames was started by Duffin and Schaeffer \cite{duffin1952class} and improvised greatly by work of Daubechies et al. in \cite{daubechies1986painless}. Recent work on characterization of tight frames as minimizers of a certain potential function was done in \cite{benedetto2003finite} and \cite{casazza2006physical}. The growing interest in developing the theory of tight frames is mainly because, tight frames possess several desirable properties. For example, the representation of signals using tight frames exhibits better resilience to noise and quantization \cite{shen2006image}, \cite{zhou2016adaptive}. Tight frames are also considered to be good for representing signals sparsely, and are the \( \ell_2 \)-optimal dictionaries for representing vectors that are uniformly distributed over spheres \cite[Proposition 2.13, p.\ 10]{sheriff2016optimal}. These are just a few among the plethora of useful properties of tight frames that make them relevant and important for many applications. 

We now formally discuss the concept of tight frames, their properties, and their characterization.
\begin{definition}
The fame operator \( G_{ \ftf } : H_n \longrightarrow H_n \) of a sequence \( \ftf \) of vectors is defined by
\[
G_{ \ftf } (v) \coloneqq \sum_{i = 1}^K \langle v, v_m \rangle v_m.
\]
\end{definition}

The following result provides the necessary and sufficient conditions for tightness of a frame.
\begin{theorem}\cite[Proposition 1, p.\ 2, ]{casazza2006physical} \cite[Theorem 2.1, p.\ 4]{benedetto2003finite}
\label{theorem:casazza}
A sequence of vectors \( \ftf \subset H_n \) is a tight frame to \( H_n \) with some constant \( \A > 0 \), if and only if 
\begin{itemize}
\item The lengths \( \norm{v_i} \) of the frame vectors satisfy
\begin{equation*}
\label{eq:nec-suff-condition-for-tightness}
\max_{i = 1,\ldots,K} \norm{v_i}^2 \leq \A \coloneqq \frac{1}{n} \sum_{i = 1}^K \norm{v_i}^2.
\end{equation*}

\item The frame operator satisfies \( G_{ \ftf } = \A I_n  \), where \( I_n \) is the identity operator on \( H_n \).
\end{itemize}
\end{theorem}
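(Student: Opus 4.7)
The key bridge between the two sides of the equivalence is the identity
\[
\langle G_{ \ftf } v, v \rangle \;=\; \sum_{i = 1}^{K} \abs{\langle v, v_i \rangle}^2,
\]
which holds for every \( v \in H_n \) by direct expansion of the definition of \( G_{\ftf} \). My plan is to use this identity to handle both directions of the biconditional, and to read off the constant \( \A \) by taking the trace of \( G_{\ftf} \).

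For the ``if'' direction I would assume the frame operator condition \( G_{\ftf} = \A I_n \) (the second bullet), substitute into the identity above, and immediately obtain \( \sum_{i=1}^K \abs{\langle v, v_i\rangle}^2 = \A \norm{v}^2 \) for all \( v \), which is exactly the tight frame definition. Thus the length bound is not strictly needed for this direction; it appears instead as a necessary consequence on the other side.

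For the ``only if'' direction I would start from the defining tight-frame identity, which can be recast as \( \langle (G_{\ftf} - \A I_n) v, v\rangle = 0 \) for every \( v \in H_n \). Because \( G_{\ftf} \) is self-adjoint (it is a sum of symmetric rank-one operators \( v \mapsto \langle v, v_i\rangle v_i \)), the vanishing of this quadratic form forces the operator itself to vanish, yielding \( G_{\ftf} = \A I_n \). Taking the trace of this equation and using \( \trace(G_{\ftf}) = \sum_{i=1}^K \norm{v_i}^2 \) then pins down the constant as \( \A = \tfrac{1}{n}\sum_i \norm{v_i}^2 \). Finally, substituting \( v = v_j \) into the tight-frame identity gives \( \A \norm{v_j}^2 = \sum_i \abs{\langle v_j, v_i\rangle}^2 \geq \norm{v_j}^4 \); dividing by \( \norm{v_j}^2 \) (the case \( v_j = 0 \) is trivial) yields \( \norm{v_j}^2 \leq \A \) for every \( j \), establishing the first bullet.

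There is no real obstacle here: both directions reduce to routine manipulations of the same quadratic-form identity. The only step requiring a little care is the passage from ``\( \langle (G_{\ftf} - \A I_n) v, v\rangle = 0 \) for all \( v \)'' to ``\( G_{\ftf} - \A I_n = 0 \)'', which invokes self-adjointness (over a real Hilbert space this is immediate by polarization applied to the symmetric bilinear form of the self-adjoint operator \( G_{\ftf} - \A I_n \)).
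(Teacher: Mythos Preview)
The paper does not supply its own proof of this theorem; it is quoted from the cited references and used as a black box. There is therefore nothing in the paper to compare your argument against.

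Your argument is correct for the reading ``tight frame \(\Longleftrightarrow\) both bullets hold,'' and your steps (the quadratic-form identity, self-adjointness of \(G_{\ftf}\), the trace computation, and the substitution \(v=v_j\)) are the standard ones. Your observation that the first bullet is not needed for the ``if'' direction is also correct and worth highlighting: in the original sources the two bullets are really two \emph{separate} statements. The second bullet, \(G_{\ftf}=\A I_n\), is the exact operator characterization of tightness (Casazza). The first bullet, \(\max_i \norm{v_i}^2 \le \tfrac{1}{n}\sum_i \norm{v_i}^2\), is the Benedetto--Fickus condition for the \emph{existence} of a tight frame with prescribed norms \(\norm{v_i}\); it is necessary for a given tight sequence (as you show) but is not by itself sufficient for a \emph{specific} sequence to be tight. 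The paper's phrasing conflates these, and your proof implicitly sorts this out by noting that bullet two alone carries the sufficiency.
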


\subsection{Characterization of Tight Frames}
We observe that the definition of a tight frame does not give a direct method to compute them. One of the ways to compute tight frames is by minimizing a certain potential function that is motivated from physical examples briefly outlined in \cite{benedetto2003finite}, \cite{casazza2006physical}. The following definitions are relevant for this article:
 
\begin{definition}
The frame force \( \FF : H_n \times H_n \longrightarrow H_n \) experienced by a point \( u \in H_n \) due to another point \( v \in H_n \) is defined by
\[
\FF \left( u,v \right) \coloneqq 2 \; \langle u, v \rangle \;(u - v).
\]
\end{definition}

\begin{definition}
\label{def:FP}
The frame potential function \( \FP : H_n^K \longrightarrow \R{} \) of a sequence of vectors \( \ftf \) is defined by
\[
\FP \ftf \coloneqq \sum_{i = 1}^K \sum_{j = 1}^K \abs{ \langle v_j , v_i \rangle}^2.
\] 
\end{definition}

Observe that the frame force between any two points in \( H_n \) pushes them towards orthogonality. In this sense we can say that a tight frame is the ``most orthogonal'' collection of vectors in \( H_n \); alternatively, a tight frame is a collection of vectors that are maximally spread out in the space. It was established in \cite{casazza2006physical}, \cite{benedetto2003finite} that finite tight frames for \( H_n \) are in an equilibrium configuration under the frame force, and hence are the local minimizers of the associated frame potential. Thus, tight frames are characterized as the minimizers of the frame potential, and can be computed by variational techniques. This particular fact is the assertion of the following theorem.
\begin{theorem}\cite[Proposition 4, p.\ 8]{casazza2006physical}
\label{FP theorem}
For any sequence of vectors \( \ftf \subset H_n \), the following holds
\[
\frac{1}{n} \bigg( \sum_{i = 1}^K \norm{v_i}^2 \bigg)^2 \leq \FP\ftf.
\]
The preceding inequality is satisfied with equality if and only if \( \ftf \) is a tight frame of \( H_n \).
\end{theorem}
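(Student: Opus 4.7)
The plan is to rewrite both sides of the claimed inequality as spectral functionals of the frame operator \( G \coloneqq G_{(v_1,\ldots,v_K)} \), and then reduce the inequality to a one-line application of the Cauchy--Schwarz (or equivalently, the power-mean) inequality applied to the eigenvalues of \( G \).

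First I would observe the two key identities. Writing \( V \) for the \( n \times K \) matrix whose columns are \( v_1,\ldots,v_K \), one has \( G = V V\transp \), and therefore
\begin{align*}
\sum_{i=1}^K \norm{v_i}^2 &= \trace(V\transp V) = \trace(G), \\
\FP(v_1,\ldots,v_K) &= \sum_{i,j=1}^K \abs{\langle v_i, v_j\rangle}^2 = \trace\bigl((V\transp V)^2\bigr) = \trace(G^2).
\end{align*}
Both identities are immediate from expanding the Frobenius norm of \( V\transp V \) and cycling the trace. The claim therefore becomes the statement that \( \frac{1}{n}\bigl(\trace(G)\bigr)^2 \le \trace(G^2) \) for the symmetric positive semidefinite operator \( G \) on \( H_n \).

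Second, I would diagonalize \( G \). Letting \( \lambda_1,\ldots,\lambda_n \ge 0 \) be its eigenvalues, one has \( \trace(G) = \sum_k \lambda_k \) and \( \trace(G^2) = \sum_k \lambda_k^2 \), and Cauchy--Schwarz gives \( \bigl(\sum_k \lambda_k\bigr)^2 \le n \sum_k \lambda_k^2 \), which is precisely the desired inequality. This is the main computation and there is no real obstacle here; the substantive content of the theorem sits entirely in the characterization of the equality case, which I handle next.

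Finally, for the equality clause, I would note that Cauchy--Schwarz is tight precisely when \( \lambda_1=\cdots=\lambda_n \), i.e.\ when \( G = c\, I_n \) for some \( c \ge 0 \). Taking traces forces \( c = \tfrac{1}{n}\sum_i \norm{v_i}^2 = \A \). It remains to certify that the side condition \( \max_i \norm{v_i}^2 \le \A \) from Theorem \ref{theorem:casazza} is automatic: for any index \( j \),
\[
\A \norm{v_j}^2 = \langle v_j, G v_j \rangle = \sum_{i=1}^K \abs{\langle v_j, v_i\rangle}^2 \ge \abs{\langle v_j, v_j\rangle}^2 = \norm{v_j}^4,
\]
so \( \norm{v_j}^2 \le \A \). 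Thus \( G = \A I_n \) together with this length bound, which by Theorem \ref{theorem:casazza} is equivalent to \( (v_1,\ldots,v_K) \) being a tight frame of \( H_n \) with constant \( \A \). Conversely, if the sequence is a tight frame then \( G = \A I_n \), all eigenvalues coincide, and equality holds in Cauchy--Schwarz. The only delicate point worth flagging is this last verification of the length condition from \( G=\A I_n \); everything else is a clean translation between the frame-theoretic and spectral pictures.
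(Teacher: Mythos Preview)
Your argument is correct. Note, however, that the paper does not supply its own proof of this theorem: it is quoted from \cite{casazza2006physical} and used as a black box (the paper only invokes it to derive Proposition~\ref{theorem:MFP-theorem}). So there is no in-paper proof to compare against; your spectral reduction via \( \trace(G) \) and \( \trace(G^2) \) together with Cauchy--Schwarz on the eigenvalues is the standard route and is exactly what the cited source does.

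Two minor comments. First, the detour through the length condition of Theorem~\ref{theorem:casazza} is unnecessary: that theorem, read correctly, gives two \emph{separate} equivalent characterizations of tightness (the two bullets come from two distinct cited results), so \( G = \A I_n \) alone already certifies tightness. Your verification that \( \norm{v_j}^2 \le \A \) follows from \( G=\A I_n \) is correct but redundant. Second, strictly speaking the equality clause as stated fails in the degenerate case \( v_1=\cdots=v_K=0 \), where both sides vanish yet the sequence does not span \( H_n \) and hence is not a frame; this is a defect of the statement rather than of your proof, and the intended reading carries an implicit nontriviality assumption.
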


From Theorem \ref{FP theorem} we conclude that tight frames are not only local, but are global minimizers of the frame potential, subject to the constraint that the length of each vector in the sequence is fixed. Thus, we can say that the frame potential of a given sequence \( \ftf \) gives us a measure of its tightness. However, the lower bound of the frame potential given by Theorem \ref{FP theorem} depends on the sequence itself. This restricts the use of the frame potential of a sequence as a measure of its tightness when comparing two generic frames. For example, let us consider the frames \( F_1 \coloneqq \big( (1, 0)\transp, (1, 1)\transp \big) \) and \( F_2 \coloneqq \big( (10, 0)\transp, (-5, -5\sqrt{3})\transp, (-5, 5\sqrt{3} )\transp \big) \) of \( \R{2} \). From Definition \ref{def:FP}, the frame potential of the frames \( F_1 \) and \( F_2 \) are \( 7 \) and \( 45000 \) respectively. Even though \( 7 < 45000 \), from the second assertion of Theorem \ref{theorem:casazza} we see that the frame \( F_1 \) is not tight, where as \( F_2 \) is a tight frame. To address this issue, we normalize the frame potential by appropriately scaling the vectors so that the  frame potential calculated this way is a constant for every tight frame.

Let \( \ftf \subset H_n \) be any sequence of vectors, we define the \emph{Normalized Frame Potential} ( \( \MFP \) ) of the sequence \( \ftf \) using its frame operator as
\begin{equation}
\label{eq:MFP-definition}
\MFP\ftf \coloneqq \frac{ \trace G_{ \ftf }^2 }{ \Big( \trace G_{ \ftf } \Big)^2 }.
\end{equation}
We get the following analogue of Theorem \ref{FP theorem}.
\begin{proposition}
\label{theorem:MFP-theorem}
For any sequence of vectors \( \ftf \subset H_n \), we have
\[
\MFP\ftf \geq \frac{1}{n}.
\]
The preceding inequality is satisfied with the equality if and only if \( \ftf \) is a tight frame of \( H_n \).
\end{proposition}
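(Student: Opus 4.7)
My plan is to reduce the inequality to a Cauchy--Schwarz type statement about the eigenvalues of the frame operator and then read off the equality condition from Theorem~\ref{theorem:casazza}.

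First I would observe that the frame operator \( G_{\ftf} = \sum_{i=1}^K v_i v_i^\ast \) is self-adjoint and positive semi-definite on \( H_n \). Diagonalizing it in an orthonormal basis yields eigenvalues \( \lambda_1, \ldots, \lambda_n \geq 0 \), so that
\[
\trace G_{\ftf} = \sum_{j=1}^n \lambda_j, \qquad \trace G_{\ftf}^2 = \sum_{j=1}^n \lambda_j^2.
\]
Substituting these into the definition \eqref{eq:MFP-definition} converts the claim into an assertion purely about the nonnegative numbers \( \lambda_1, \ldots, \lambda_n \).

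Next I would apply the Cauchy--Schwarz inequality to the vectors \( (1, \ldots, 1) \) and \( (\lambda_1, \ldots, \lambda_n) \) in \( \R{n} \), obtaining
\[
\Bigl( \sum_{j=1}^n \lambda_j \Bigr)^{\!2} \;\leq\; n \sum_{j=1}^n \lambda_j^2,
\]
which is exactly \( \MFP\ftf \geq 1/n \). Equality in Cauchy--Schwarz holds if and only if the two vectors are proportional, i.e.\ \( \lambda_1 = \cdots = \lambda_n =: \A \). This is equivalent to \( G_{\ftf} = \A I_n \).

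Finally, for the equality characterization, I would invoke the second bullet of Theorem~\ref{theorem:casazza}: \( G_{\ftf} = \A I_n \) with \( \A > 0 \) is precisely the tight frame condition. The only subtlety is the degenerate case \( \A = 0 \), in which all \( v_i \) vanish and neither side of the inequality is well-defined; this is easily excluded by assuming the sequence is not identically zero (otherwise the statement is vacuous). I do not anticipate any serious obstacle, as the whole argument is essentially the equality case of Cauchy--Schwarz packaged through the spectral decomposition of the frame operator.
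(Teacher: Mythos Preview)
Your argument is correct, but it follows a different route than the paper. The paper's proof does not diagonalize the frame operator; instead it invokes the identity \( \trace G_{\ftf}^2 = \FP\ftf \) and \( \trace G_{\ftf} = \sum_i \norm{v_i}^2 \) (both from \cite{casazza2008classes}), so that \( \MFP\ftf = \FP\ftf / \bigl(\sum_i \norm{v_i}^2\bigr)^2 \), and then appeals directly to Theorem~\ref{FP theorem} for both the inequality and its equality case. Your approach bypasses the frame potential altogether: the spectral decomposition plus Cauchy--Schwarz on the eigenvalues gives the bound in one line, and Theorem~\ref{theorem:casazza} then handles the equality characterization. This is more self-contained and arguably more elementary, since it does not rely on the cited frame-potential result; the paper's route, on the other hand, makes explicit that \( \MFP \) is literally the frame potential rescaled, which is the conceptual point motivating the definition in the first place.
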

\begin{proof}
From \cite[Lemma 1, p.\ 7]{casazza2008classes}
\begin{equation*}
\begin{aligned}
\trace G_{ \ftf }^2 &= \FP\ftf \quad \text{and it also follows that}, \\
\trace G_{ \ftf }   &= \sum_{i = 1}^K \norm{v_i}^2.
\end{aligned}
\end{equation*}
Then
\begin{equation*}
\MFP\ftf = \frac{ \FP\ftf }{ \Big( \sum_{i = 1}^K \norm{v_i}^2 \Big)^2 },
\end{equation*}
and the assertion follows immediately from Theorem \ref{FP theorem}.
\end{proof}

The \( \MFP \) of the frames \( F_1 \) and \( F_2 \) are \( 7/9 \) and \( 1/2 \) respectively, which clearly indicates that the frame \( F_2 \) is tighter than the frame \( F_1 \). Our \( \MFP \) \eqref{eq:MFP-definition} allows us to compare the tightness of any two arbitrary sequences of vectors in \( H_n \) and therefore can be regarded as a valid measure of tightness of the given sequence of vectors.

\section{Main Results}

We would like to find the optimal orientations of the columns of the matrix \( \kalman{A}{B}{T} \), that optimize the three measure of qualities discussed in Section \ref{section:Review-of-some-conventional-MOQ's-for-LTI-systems}, namely, \( \trace \big( \grammian{A}{B}{T}^{-1} \big) \), \( \lambda_{\min}^{-1}( \grammian{A}{B}{T} ) \) and \( \det(\grammian{A}{B}{T}) \). Let us consider a generic sequence \( \ftf \subset \R{n} \) of vectors. Then the three MOQ's stated above evaluated for the generic sequence \( \ftf \) are given in terms of the corresponding frame operator \( G_{ \ftf } \) by \( \trace \big( G_{ \ftf }^{-1} \big) \), \( \lambda_{\min}^{-1} \big( G_{ \ftf } \big) \) and \( \det \big( G_{ \ftf } \big) \) respectively. Let \( \big( \alpha_1, \ldots, \alpha_K \big) \) be a non increasing sequence of positive real numbers such that \( \alpha_1 \leq \frac{1}{n} \sum_{i = 1}^K \alpha_i \). 

In order to find the optimal orientation of the vectors, we optimize the three objective functions  \( \trace \big( G_{ \ftf }^{-1} \big) \), \( \lambda_{\min}^{-1} \big( G_{ \ftf } \big) \) and \( \det \big( G_{ \ftf } \big) \), subject to the constraint that the lengths of the vectors are fixed, i.e., \( \langle v_i, v_i \rangle = \alpha_i \) for all \( i = 1, \ldots, K \). Thus, we have the following three optimization problems:
\begin{equation}
\label{eq:trace-MOQ-optimization}
\begin{aligned}
& \underset{v_i}{\text{minimize}}
& & \trace \Big( G_{ ( v_1, \ldots, v_K ) }^{-1} \Big)  \\
& \text{subject to}
& & \langle v_i, v_i \rangle = \alpha_i \quad \text{for all \( i = 1, \ldots, K \)}, \\
& & & \Span\ftf = \R{n}.
\end{aligned}
\end{equation}

\begin{equation}
\label{eq:min-eigenvalue-MOQ-optimization}
\begin{aligned}
& \underset{v_i}{\text{minimize}}
& & \lambda_{\min}^{-1} \Big( G_{ \ftf } \Big)  \\
& \text{subject to}
& & \langle v_i, v_i \rangle = \alpha_i\quad \text{for all \( i = 1, \ldots, K \)}, \\
& & & \Span\ftf = \R{n}.
\end{aligned}
\end{equation}

\begin{equation}
\label{eq:det-MOQ-optimization}
\begin{aligned}
& \underset{v_i}{\text{maximize}}
& & \det \Big( G_{ \ftf } \Big)  \\
& \text{subject to}
& & \langle v_i, v_i \rangle = \alpha_i \quad \text{for all \( i = 1, \ldots, K \)}, \\
& & & \Span\ftf = \R{n}.
\end{aligned}
\end{equation}
Note that the optimization problems \eqref{eq:trace-MOQ-optimization}, \eqref{eq:min-eigenvalue-MOQ-optimization} and \eqref{eq:det-MOQ-optimization} are optimization problems over sequences of vectors. The next Theorem characterizes and provides solutions of \eqref{eq:trace-MOQ-optimization}, \eqref{eq:min-eigenvalue-MOQ-optimization} and \eqref{eq:det-MOQ-optimization}; in its proof we get equivalent optimization problems with non negative definite matrices as variables. 
\begin{theorem}
\label{theorem:optimiality-of-tight-frames-for-classical-MOQ's}
A sequence of vectors \( \ftf \), that is feasible for the optimization problems \eqref{eq:trace-MOQ-optimization}, \eqref{eq:min-eigenvalue-MOQ-optimization} and \eqref{eq:det-MOQ-optimization} is an optimal solution if and only if it is a tight frame of \( \R{n} \) . 
\end{theorem}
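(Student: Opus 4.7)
The plan is to reduce each of the three vector optimization problems \eqref{eq:trace-MOQ-optimization}--\eqref{eq:det-MOQ-optimization} to an optimization over the eigenvalue spectrum of the frame operator $G_{\ftf}$ subject to a fixed-trace constraint, and then apply three classical scalar inequalities---a Jensen-type bound for the convex map $x\mapsto 1/x$, the elementary bound $\lambda_{\min}\le \frac{1}{n}\trace$, and the arithmetic-geometric mean inequality---whose equality cases all coincide with ``all eigenvalues of $G_{\ftf}$ are equal''.

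First I would observe that under the length constraints $\langle v_i, v_i \rangle = \alpha_i$, the quantity $\trace G_{\ftf} = \sum_{i=1}^K \norm{v_i}^2 = \sum_{i=1}^K \alpha_i$ is constant on each feasible set. Let $\lambda_1,\ldots,\lambda_n > 0$ denote the eigenvalues of $G_{\ftf}$, where positivity follows from the constraint $\Span\ftf = \R{n}$. Each of the three objectives is then a symmetric function of $\lambda_1,\ldots,\lambda_n$ subject to $\sum_{j=1}^n \lambda_j = \sum_{i=1}^K \alpha_i$ being fixed. For \eqref{eq:trace-MOQ-optimization}, Jensen's inequality applied to $x\mapsto 1/x$ yields $\trace\bigl(G_{\ftf}^{-1}\bigr) = \sum_j 1/\lambda_j \ge n^2 / \sum_j \lambda_j$. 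For \eqref{eq:min-eigenvalue-MOQ-optimization}, the bound $\lambda_{\min}(G_{\ftf}) \le \tfrac{1}{n}\sum_j \lambda_j$ gives the analogous lower bound on $\lambda_{\min}^{-1}(G_{\ftf})$. For \eqref{eq:det-MOQ-optimization}, AM--GM gives $\det(G_{\ftf}) \le \bigl(\tfrac{1}{n}\sum_j \lambda_j\bigr)^n$. In all three cases equality holds if and only if $\lambda_1 = \cdots = \lambda_n$.

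The common extremal condition ``$\lambda_1 = \cdots = \lambda_n$'' is equivalent to $G_{\ftf} = \A\, I_n$ with $\A = \tfrac{1}{n}\sum_i \alpha_i$, which by the second assertion of Theorem~\ref{theorem:casazza} is in turn equivalent to $\ftf$ being a tight frame of $\R{n}$. The hypothesis $\alpha_1 \le \tfrac{1}{n}\sum_i \alpha_i$ combined with Theorem~\ref{theorem:casazza} guarantees that such a tight frame with the prescribed squared lengths $\alpha_i$ actually exists in $\R{n}$, so the three bounds above are attained by a feasible sequence and the ``if'' direction is complete. The ``only if'' direction is then immediate from the equality cases: any optimal sequence must produce a frame operator with equal eigenvalues, which forces $G_{\ftf} = \A\, I_n$ and thus tightness.

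The main obstacle in turning this sketch into a clean proof is the reduction from the sequence-level optimization to the eigenvalue-level one: the objectives depend only on the spectrum of $G_{\ftf}$, but the constraint ``$\norm{v_i}^2 = \alpha_i$'' is imposed on the individual vectors rather than on $G_{\ftf}$ itself. Fortunately, both directions require only two pieces of information about this interface---that $\trace G_{\ftf} = \sum_i \alpha_i$ under the length constraints, which is immediate, and that some feasible sequence actually realizes $G_{\ftf} = \A\, I_n$ with the prescribed lengths---and the latter is exactly what Theorem~\ref{theorem:casazza} supplies under the assumed ordering and mean condition on $(\alpha_1,\ldots,\alpha_K)$.
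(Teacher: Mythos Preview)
Your argument is correct and in fact more direct than the paper's. The paper first invokes the Schur--Horn type decomposition result (Lemma~\ref{lemma:rod}) to translate each of \eqref{eq:trace-MOQ-optimization}--\eqref{eq:det-MOQ-optimization} into a matrix optimization over $G\in\posdef{n}$ subject to the majorization constraint $\eigval{}(G)\succ\alpha$, and then shows in three separate claims that $G\opt=\A I_n$ is the unique optimum of each matrix problem. You bypass this reduction entirely: since the objectives depend only on the spectrum of $G_{\ftf}$ and the length constraints already pin down $\trace G_{\ftf}=n\A$, the three scalar inequalities (Jensen, $\lambda_{\min}\le\tfrac1n\trace$, AM--GM) give the sharp bounds directly on the feasible set, with common equality case $G_{\ftf}=\A I_n$. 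This is a genuine simplification; the majorization machinery is not needed for the optimality characterization.

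One small point to tighten: for attainability you appeal to Theorem~\ref{theorem:casazza} to assert that a tight frame with the prescribed squared lengths $(\alpha_1,\ldots,\alpha_K)$ exists under the hypothesis $\alpha_1\le\A$. The referenced results of Benedetto--Fickus and Casazza do establish exactly this existence, but Theorem~\ref{theorem:casazza} as stated in the paper is phrased as a characterization of a \emph{given} sequence rather than as an existence statement. The paper itself handles existence by checking that $\eigval{}(\A I_n)\succ\alpha$ and invoking Lemma~\ref{lemma:rod}. Either route closes the gap; just make the citation for existence explicit.
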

\begin{proof}
For the non increasing finite sequences \( \eigval{} \coloneqq (\eigval{1}, \ldots, \eigval{n}) \), and \( \alpha \coloneqq ( \alpha_1, \ldots, \alpha_K)  \) of positive real numbers we define the relation \( \eigval{} \succ  \alpha \) if the following two conditions hold:
\begin{equation}
\label{eq:rod-majorization-inequality}
\begin{aligned}
\sum_{i = 1}^m \eigval{i} & \geq \sum_{i = 1}^m \alpha_i \quad \text{for all \( m = 1, \ldots, n - 1 \), and} \\
\sum_{i = 1}^n \eigval{i} & = \sum_{i = 1}^K \alpha_i.
\end{aligned}
\end{equation}
The conditions of \eqref{eq:rod-majorization-inequality} are analogue of the standard majorization conditions \cite[Chapter 1]{marshall1979inequalities}.

For a symmetric non negative definite matrix \( G \in \possemdef{n} \), we define \( \eigval{}(G) \coloneqq \big( \eigval{1}(G), \ldots, \eigval{n}(G) \big) \) to be the non increasing sequence of the eigenvalues of the matrix \( G \). Let \( \A \coloneqq \big( \frac{1}{n} \sum_{i = 1}^K \alpha_i \big) \).

We will use the following result related to the decomposition of non-negative definite matrices. 
\begin{lemma}\cite[Theorem 4.6]{antezana2007schur} \cite[Theorem 2.8, p.\ 4]{casazza2008classes}
\label{lemma:rod}
For any given sequence of positive real numbers \( \alpha \) and a non negative definite matrix \( G \in \posdef{n} \) with \( K \geq n \), the following statements are equivalent:
\begin{itemize}
\item There exists a sequence of vectors \( \ftf \subset \R{n}\) such that \( G = G_{\ftf} \) and \( \langle v_i, v_i \rangle = \alpha_i \) for all \( i = 1, 2, \ldots, K \).

\item \( \eigval{}(G) \succ \alpha \).
\end{itemize}
\end{lemma}
On the one hand, for any sequence of vectors \( \ftf \) that are feasible for the optimization problems \eqref{eq:trace-MOQ-optimization}, \eqref{eq:min-eigenvalue-MOQ-optimization} and \eqref{eq:det-MOQ-optimization}, from Lemma \ref{lemma:rod} it follows that \( G_{ \ftf } \in \posdef{n} \) and \( \eigval{}(G) \succ \alpha \). On the other hand, for any symmetric positive definite matrix \( G \) such that \( \eigval{}(G) \succ \alpha \), we conclude again from Lemma \ref{lemma:rod} that there exists a sequence of vectors \( \ftf \subset \R{n} \) such that \( G = G_{ \ftf } \), \( \Span\ftf = \R{n} \), and \( \langle v_i, v_i \rangle = \alpha_i \) for all \( i = 1, \ldots, K \). Therefore, under the mapping 
\begin{equation}
\label{eq:rod-equivalence-map}
\R{n \times K} \ni \ftf \longmapsto G_{ \ftf } \in \possemdef{n}, 
\end{equation}
the optimization problems:
\begin{equation}
\label{eq:trace-MOQ-optimization-1}
\begin{aligned}
& \underset{G \; \in \; \posdef{n}}{\text{minimize}}
& & \trace ( G^{-1} )  \\
& \text{subject to}
& & \eigval{}(G) \succ \alpha ,
\end{aligned}
\end{equation}

\begin{equation}
\label{eq:min-eigenvalue-MOQ-optimization-1}
\begin{aligned}
& \underset{G \; \in \; \posdef{n}}{\text{minimize}}
& & \lambda_{\min}^{-1} (G)  \\
& \text{subject to}
& & \eigval{}(G) \succ \alpha ,
\end{aligned}
\end{equation}

\begin{equation}
\label{eq:det-MOQ-optimization-1}
\begin{aligned}
& \underset{G \; \in \; \posdef{n}}{\text{maximize}}
& & \det (G)  \\
& \text{subject to}
& & \eigval{}(G) \succ \alpha ,
\end{aligned}
\end{equation}
are equivalent to \eqref{eq:trace-MOQ-optimization}, \eqref{eq:min-eigenvalue-MOQ-optimization} and \eqref{eq:det-MOQ-optimization} respectively.

\begin{claim}
\label{claim:trace-moq}
\( G\opt \coloneqq \A I_n \) is the unique (corresponding to a given \( \alpha \)) optimal solution of the optimization problem \eqref{eq:trace-MOQ-optimization-1}. 
\end{claim}
\begin{proof}
Let us consider the following optimization problem:
\begin{equation}
\label{eq:trace-MOQ-optimization-different}
\begin{aligned}
& \underset{G \; \in \; \posdef{n}}{\text{minimize}}
& & \trace ( G^{-1} )  \\
& \text{subject to}
& & \trace(G) = n \A.
\end{aligned}
\end{equation}
The optimization problem \eqref{eq:trace-MOQ-optimization-different} is the same as (22) in \cite[p.\ 17]{sheriff2016optimal} with \( \Sigma_V = I_n \), whose solution is given in (25) of the same article. Therefore, from \cite{sheriff2016optimal} we conclude that \( G\opt \coloneqq \A I_n \) is the unique optimal solution to the problem \eqref{eq:trace-MOQ-optimization-different}. 

It is easy to see that for a symmetric non negative definite matrix \( G \), the condition that \( \eigval{}(G) \succ \alpha \) is sufficient for the equality \( \trace(G) = n \A \) to hold. Therefore, the optimum value \( \trace( {G\opt}^{-1} ) \) in the optimization problem \eqref{eq:trace-MOQ-optimization-different} is a lower bound of the optimal value (if it exists) of the problem \eqref{eq:trace-MOQ-optimization-1}.

However, it can be easily verified that \( \eigval{}( G\opt ) \succ \alpha \); thus, \( G\opt \) is feasible for the optimization problem \eqref{eq:trace-MOQ-optimization-1}, and the objective function evaluated at \( G = G\opt \) is equal to \( \trace( {G\opt}^{-1} ) \), which is a lower bound of the optimal value of \eqref{eq:trace-MOQ-optimization-1}. Therefore, the optimization problem \eqref{eq:trace-MOQ-optimization-1} admits an optimal solution. Since \( G\opt \) is the unique solution to \eqref{eq:trace-MOQ-optimization-different}, we conclude that \( G\opt \) is the unique optimal solution to \eqref{eq:trace-MOQ-optimization-1} as well.
\end{proof}

\begin{claim}
\label{claim:min-eigen-value}
\( G\opt \coloneqq \A I_n \) is the unique (corresponding to a given \( \alpha \)) optimal solution of the optimization problem \eqref{eq:min-eigenvalue-MOQ-optimization-1}. 
\end{claim}
\begin{proof}
The objective function and the constraint in problem \eqref{eq:min-eigenvalue-MOQ-optimization-1} can be explicitly characterized in terms of the eigenvalues of the variable matrix \( G \). Then under the map 
\begin{equation}
\label{eq:matrix-to-eigenvalues-map}
\posdef{n} \ni G \longmapsto  \eigval{}(G),
\end{equation} 
we get the following optimization problem equivalent to \eqref{eq:min-eigenvalue-MOQ-optimization-1}.
\begin{equation}
\label{eq:min-eigenvalue-MOQ-optimization-2}
\begin{aligned}
& \underset{\eigval{i} > 0}{\text{minimize}}
& & \big( \min \{ \eigval{1}, \ldots, \eigval{n} \} \big)^{-1}  \\
& \text{subject to}
& & \eigval{} \succ \alpha. 
\end{aligned}
\end{equation}
For every sequence \(  ( \eigval{1}, \ldots, \eigval{n} ) \) that is feasible for the optimization problem \eqref{eq:min-eigenvalue-MOQ-optimization-2}, we see that  \( \sum_{i = 1}^n \eigval{i} = \sum_{i = 1}^K \alpha_i = n \A \). Therefore,
\begin{equation*}
\begin{aligned}
\min \{ \eigval{1}, \ldots, \eigval{n} \} & \leq \A, \quad \text{leading to,} \\
\big( \min \{ \eigval{1}, \ldots, \eigval{n} \} \big)^{-1} & \geq { \A }^{-1}.
\end{aligned}
\end{equation*}

Therefore, the value \( { \A }^{-1} \) is a lower bound of the optimal value (if it exists) of the optimization problem \eqref{eq:min-eigenvalue-MOQ-optimization-2}. Let us define \( \eigval{i}\opt \coloneqq \A \) for all \( i = 1,2,\ldots,n \). It can be easily verified that 
\begin{equation}
\label{eq:min-eigval-solution-2}
\begin{aligned}
( \eigval{1}\opt, \ldots, \eigval{n}\opt ) & \succ \alpha \quad \text{and} \\
\big( \min \{ \eigval{1}\opt, \ldots, \eigval{n}\opt \} \big)^{-1} & = { \A }^{-1}.
\end{aligned}
\end{equation}

From \eqref{eq:min-eigval-solution-2} it follows that the sequence \(  ( \eigval{1}\opt, \ldots, \eigval{n}\opt ) \) is feasible for the optimization problem \eqref{eq:min-eigenvalue-MOQ-optimization-2}, and that the objective function evaluated at \( ( \eigval{1}\opt, \ldots, \eigval{n}\opt ) \) is equal to the lower bound \( { \A }^{-1} \). Therefore, we conclude that the optimization problem \eqref{eq:min-eigenvalue-MOQ-optimization-2} admits an optimal solution, and that the sequence \( ( \eigval{1}\opt, \ldots, \eigval{n}\opt ) \) is an optimal solution. It should also be noted that \( ( \eigval{1}\opt, \ldots, \eigval{n}\opt ) \) is the unique sequence of positive real numbers that satisfies \eqref{eq:min-eigval-solution-2}. Therefore, the sequence \( ( \eigval{1}\opt, \ldots, \eigval{n}\opt ) \) is the unique optimal solution of the problem \eqref{eq:min-eigenvalue-MOQ-optimization-2}. 

Due to the equivalence of the optimization problems \eqref{eq:min-eigenvalue-MOQ-optimization-1} and \eqref{eq:min-eigenvalue-MOQ-optimization-2}, we conclude that the optimization problem \eqref{eq:min-eigenvalue-MOQ-optimization-1} also admits an optimal solution. A non negative definite symmetric matrix \( G\opt \) is an optimal solution of the problem \eqref{eq:min-eigenvalue-MOQ-optimization-1} if and only if it satisfies
\begin{equation}
\label{eq:min-eigval-solution-1}
\eigval{i} (G\opt) = \A \quad \text{for all \( i = 1, 2, \ldots, n \),}
\end{equation}
and we know for a fact that \( \A I_n \) is the only matrix that satisfies \eqref{eq:min-eigval-solution-1} that is also feasible for the problem \eqref{eq:min-eigenvalue-MOQ-optimization-1}. Thus, \( G\opt = \A I_n \) is the unique optimal solution of the problem \eqref{eq:min-eigenvalue-MOQ-optimization-1}. 
 \end{proof} 

\begin{claim}
\label{claim:det-moq}
\( G\opt \coloneqq \A I_n \) is the unique (corresponding to a given \( \alpha \)) optimal solution to the optimization problem \eqref{eq:det-MOQ-optimization-1}. 
\end{claim}
\begin{proof}
Writing the objective function and the constraints of the optimization problem \eqref{eq:det-MOQ-optimization-1} in terms of the eigenvalues \(  \eigval{}(G) \) of the matrix \( G \), we get the following optimization problem equivalent to \eqref{eq:det-MOQ-optimization-1}.
\begin{equation}
\label{eq:det-MOQ-optimization-2}
\begin{aligned}
& \underset{\eigval{i} > 0}{\text{maximize}}
& & \prod_{i = 1}^n \eigval{i}  \\
& \text{subject to}
& & \eigval{} \succ \alpha.
\end{aligned}
\end{equation}
For every sequence \( ( \eigval{1}, \ldots, \eigval{n} ) \) that is feasible for the problem \eqref{eq:det-MOQ-optimization-2}, it follows that 
\begin{equation}
\prod_{i = 1}^n \eigval{i} \leq \Big( \frac{1}{n} \sum\limits_{i = 1}^n \eigval{i} \Big)^n = \Big( \frac{1}{n} \sum\limits_{i = 1}^K \alpha_i \Big)^n = { \A }^n .
\end{equation}
Therefore, the value \( { \A }^n \) is an upper bound to the optimal value (if it exists) of the optimization problem \eqref{eq:det-MOQ-optimization-2}. Let us define \( \eigval{i}\opt \coloneqq \A \) for all \( i = 1,2,\ldots,n \), it follows that  
\begin{equation}
\label{eq:det-eigval-solution-2}
\begin{aligned}
( \eigval{1}\opt, \ldots, \eigval{n}\opt ) & \succ \alpha, \ \text{and} \\
\prod_{i = 1}^n \eigval{i}\opt & = { \A }^n .
\end{aligned}
\end{equation}
It should also be noted that \( ( \eigval{1}\opt, \ldots, \eigval{n}\opt ) \) is the unique sequence of positive real numbers that satisfies \eqref{eq:det-eigval-solution-2}. Therefore, the optimization problem \eqref{eq:det-MOQ-optimization-2} admits an unique optimal solution and the sequence \( ( \eigval{1}\opt, \ldots, \eigval{n}\opt ) \) is the unique optimizer. 

Equivalently, as we have seen in the proof of Claim \ref{claim:min-eigen-value}, \( G\opt \coloneqq \A I_n \) is the unique optimal solution to the optimization problem \eqref{eq:det-MOQ-optimization-1}.
\end{proof}
Due to the equivalence of the optimization problem pairs \eqref{eq:trace-MOQ-optimization}-\eqref{eq:trace-MOQ-optimization-1}, \eqref{eq:min-eigenvalue-MOQ-optimization}-\eqref{eq:min-eigenvalue-MOQ-optimization-1} and \eqref{eq:det-MOQ-optimization}-\eqref{eq:det-MOQ-optimization-1}, we conclude from the Claims \ref{claim:trace-moq}, \ref{claim:min-eigen-value}, and \ref{claim:det-moq} that a sequence \( \ftf \) of vectors is an optimal solution if and only if its frame operator satisfies \( G_{ \ftf } = \A I_n \), which in turn is true if and only if \( \ftf \) is a tight frame (from Theorem \ref{theorem:casazza}). 
\end{proof}

In addition to the fact that tight frames are optimal solutions of the optimization problems \eqref{eq:trace-MOQ-optimization}, \eqref{eq:min-eigenvalue-MOQ-optimization} and \eqref{eq:det-MOQ-optimization}, we have previously listed some desirable properties of tight frames for representation of generic vectors in Section \ref{sec:tight-frame}. Most of the reachability(both ballistic and servomechanism \cite[p.\ 74-75]{brockett2015finite}) control problems arising in practice, involve solving the following linear equation for various values of \( x_0 \) and \( x_T \):
\begin{equation}
\label{eq:reachability-problem}
\begin{aligned}
x_T - A^T x_0 &= \sum_{i = 0}^{T -1} A^i B \; u(T-1-i)
\end{aligned}
\end{equation}
either directly or indirectly. This is equivalent to expressing the vector \( x_T - A^T x_0 \) as a linear combination of the columns of \( \kalman{A}{B}{T} \). We would like to have the the columns of \( \kalman{A}{B}{T} \) to be oriented as tightly as possible in order to inherit those advantages. Thus, a measure of quality of the LTI system \eqref{eq:lti-system} should be the measure of tightness of the columns of the matrix \( \kalman{A}{B}{T} \); this is a key proposal of this article.

\begin{definition}
We define a measure of quality \( \moq{A}{B}{T} \) for the LTI system \eqref{eq:lti-system} as a measure of the tightness of the columns of \( \kalman{A}{B}{T} \); defined by 
\begin{equation}
\label{eq:moq-definition}
\moq{A}{B}{T} \coloneqq \frac{\trace \grammian{A}{B}{T}^2}{\left( \trace \grammian{A}{B}{T} \right)^2}.
\end{equation}
\end{definition}

 From Proposition \ref{theorem:MFP-theorem}, for any LTI system given by the pair \( (A,B) \in (\R{n \times n} \times \R{n \times m} ) \) and \( \mathbb{N} \ni T \geq \tau \), we have 
\begin{equation}
\label{eq:moq-inequality}
\moq{A}{B}{T} \geq 1/n, 
\end{equation}
and equality holds only when the columns of matrix \( \kalman{A}{B}{T} \) constitute a tight frame for \( \R{n} \). A direct application of the Proposition \ref{theorem:MFP-theorem} gives us the following sufficient condition for the controllability of the LTI system in terms of the MOQ \( \moq{A}{B}{T} \).
\begin{proposition}
For the LTI system \eqref{eq:lti-system}, if \( \moq{A}{B}{T} < \frac{1}{n - 1} \) for some \( T \in \mathbb{N} \), then the LTI system is controllable in the classical sense.
\end{proposition}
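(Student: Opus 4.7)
The plan is to argue directly that the hypothesis forces $\kalman{A}{B}{T}$ to have full row rank. Fix the $T$ for which $\moq{A}{B}{T} < 1/(n-1)$, set $r \coloneqq \rank \grammian{A}{B}{T}$, and denote by $\eigval{1} \geq \cdots \geq \eigval{r} > 0$ the nonzero eigenvalues of $\grammian{A}{B}{T}$. Since $\grammian{A}{B}{T} = \kalman{A}{B}{T} \kalman{A}{B}{T}\transp$, we have $r = \rank \kalman{A}{B}{T} \leq n$.

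The key step is the sharpened lower bound $\moq{A}{B}{T} \geq 1/r$. Writing the trace and the trace of the square of $\grammian{A}{B}{T}$ in terms of its eigenvalues,
\[
\moq{A}{B}{T} = \frac{\sum_{i=1}^r \eigval{i}^2}{\Big(\sum_{i=1}^r \eigval{i}\Big)^2},
\]
and applying the Cauchy-Schwarz inequality to $(1,\ldots,1)$ and $(\eigval{1},\ldots,\eigval{r})$ in $\R{r}$ yields $\big(\sum_{i=1}^r \eigval{i}\big)^2 \leq r \sum_{i=1}^r \eigval{i}^2$, whence $\moq{A}{B}{T} \geq 1/r$. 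Equivalently, this bound is Proposition~\ref{theorem:MFP-theorem} applied to the columns of $\kalman{A}{B}{T}$ viewed in the $r$-dimensional subspace they span, since $\moq{A}{B}{T}$ is defined entirely through traces of $\grammian{A}{B}{T}$ and therefore depends only on the mutual inner products of the columns, not on the ambient dimension.

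Combining the hypothesis with this bound gives $1/r \leq \moq{A}{B}{T} < 1/(n-1)$, forcing $r > n-1$ and hence $r = n$. Consequently $\image \kalman{A}{B}{T} = \R{n}$, so the reachable space $\mathcal{R}_{(A,B)}$ equals $\R{n}$ and the system is controllable in the classical sense. The only real subtlety is recognizing that the inequality from Proposition~\ref{theorem:MFP-theorem} can be refined by replacing the ambient dimension $n$ with the rank of the frame operator; once this observation is in place, the remaining manipulations are routine.
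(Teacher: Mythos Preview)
Your argument is correct and is essentially the same as the paper's: the paper argues by contradiction that if the system were not controllable then the columns of $\kalman{A}{B}{T}$ span a subspace of dimension $d \le n-1$, and Proposition~\ref{theorem:MFP-theorem} applied in that subspace gives $\moq{A}{B}{T} \ge 1/d \ge 1/(n-1)$, contradicting the hypothesis. Your direct version, with the Cauchy--Schwarz computation making the bound $\moq{A}{B}{T} \ge 1/r$ explicit, is just the contrapositive of this with the proof of Proposition~\ref{theorem:MFP-theorem} unpacked.
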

\begin{proof}
Suppose not, i.e., the system \( (A,B) \) is not controllable but satisfies \( \moq{A}{B}{T} < \frac{1}{n - 1} \) for some \( T \in \mathbb{N} \). Let \( v_i \) be the \( i^{\text{th}} \) column of the matrix \( \kalman{A}{B}{T} \) for \( i = 1, \ldots, mT \). Then \( \Span(v_1, \ldots, v_{mT} ) \) is a \( d \) dimensional subspace of \( \R{n} \), where \( d \leq n - 1 \). In particular, each \( v_i \) belongs to this subspace, which is a Hilbert space in its own right. From Proposition \ref{theorem:MFP-theorem} we conclude that 
\[
\moq{A}{B}{T} = \MFP (v_1, \ldots, v_{mT}) \geq \frac{1}{d} \geq \frac{1}{n - 1} ,
\]
which is a contradiction. The assertion follows.
\end{proof}


\bibliographystyle{IEEEtran}
\bibliography{ref}

\end{document}